\documentclass{amsart}
\usepackage{graphicx}
\usepackage{hyperref}
\vfuzz2pt 
\hfuzz2pt 
\newtheorem{thm}{Theorem}
\newcommand{\R}{\mathbb R}
\begin{document}

\title[Parameterising Besicovitch Sets]{A note on the parameterisation of Besicovitch sets}%
\author{Toby C.\ O'Neil}%
\address{Department of Mathematics and Statistics\\ The Open University\\ Walton Hall\\ Milton Keynes MK7 6AA\\ UK}%
\email{t.c.oneil@open.ac.uk}%
\date{\today}
\thanks{}%
\subjclass{28A75}%
\keywords{Besicovitch set, Kakeya set, line segment, Baire-1}%

\begin{abstract}
We show that there is a set of Lebesgue measure zero in the plane within which a line segment can be rotated by a Baire-1 map.
\end{abstract}
\maketitle

In his blog post `A note on the Kakeya needle problem', Terry Tao~\cite{tao} gave a simple argument to show that there is no set of Lebesgue measure zero in the plane within which a line segment can be rotated continuously. In this short note, we show that there is a set of Lebesgue measure zero within which a line segment can be rotated by a map that is Baire-1 (a pointwise limit of a sequence of continuous functions).

To achieve this, it is enough for us to find a Baire-1 map that parameterises all line segments with slope $b\in [0,1]$ and maps them into a fixed (measurable) set of zero area, since a Baire-1 map that parameterises all possible directions can then be created by `stitching' 8 such maps together.

The key is to exploit the duality described by Falconer in Chapter~12 of~\cite{falconer}. This leads us to define a map $l\colon  \R^2\to\mathcal{K}(\R^2)$ (the non-empty compact subsets of the plane) by setting $l(a,b)$ to be the line segment of length 1 that starts at $(a,0)$ and has slope $b$.

We start by observing that $l$ is a continuous map from $[0,1]\times [0,1]$ to $\mathcal{K}(\R^2)$ when $\mathcal{K}(\R^2)$ is given the usual Hausdorff metric $d_H$. Indeed, for $a,b,a',b'\in [0,1]$,
$$d_H (l(a,b),l(a',b'))\leq C(|a-a'|+|b-b'|),$$
for some constant $C$ independent of $a,a',b,b'$.

\begin{thm}
  There is a Baire-1 map $f\colon [0,1]\to \R^2$ such that $\bigcup \{f(s):s\in [0,1]\}$ is a subset of the plane with zero area  and for each $s$, $f (s)$ is a unit line segment of slope $s$.
\end{thm}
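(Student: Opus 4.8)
The plan is to reduce the theorem to the construction of a Baire-1 \emph{selection} of base points and then to control the area through Falconer's duality and Fubini's theorem. Since $(a,b)\mapsto l(a,b)$ is continuous on $[0,1]\times[0,1]$, it suffices to produce a Baire-1 map $a\colon[0,1]\to[0,1]$ for which $B:=\bigcup_{b\in[0,1]} l(a(b),b)$ has zero area: then $f(b):=l(a(b),b)$ is Baire-1, being the composition of the Baire-1 map $b\mapsto(a(b),b)$ with the continuous map $l$; each $f(b)$ is a unit segment of slope $b$; and $\bigcup_b f(b)=B$. The eight-fold stitching described above then upgrades slopes in $[0,1]$ to all directions. (It is harmless to work instead with the segment $\{(a(b)+x,bx):x\in[0,1]\}$ of length $\sqrt{1+b^2}\in[1,\sqrt2]$ and to pass to a unit sub-segment at the very end, since a subset of a null set is null.)

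Next I would set up the duality. The line carrying $l(a,b)$ is $\{(X,Y):Y=bX-ba\}$, so to the segment of slope $b$ I associate the dual point $(b,c(b))$, where $c(b)=-b\,a(b)$ is its intercept. A point $(X_0,Y_0)$ lies on one of the chosen lines exactly when the dual line $\{(b,c):c=-X_0 b+Y_0\}$ meets the graph $G:=\{(b,c(b)):b\in[0,1]\}$. Slicing $B$ by the vertical lines $\{X=X_0\}$ and applying Fubini's theorem, $B$ has zero area if and only if for almost every $X_0$ the slice is null, and that slice is precisely the projection $\{c(b)+X_0 b:b\in[0,1]\}$ of $G$ onto a line whose direction is governed by $X_0$. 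Thus the whole problem becomes: \emph{find a Baire-1 function $c$ on $[0,1]$ such that for almost every $\lambda$ the range $R_\lambda:=\{c(b)+\lambda b:b\in[0,1]\}$ is Lebesgue-null.} This is exactly the requirement that the dual graph $G$ be an irregular (purely unrectifiable) $1$-set, surjecting onto the full slope interval in the $b$-direction while having null projections in almost every other direction, as in Besicovitch's projection theorem.

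The construction of $c$ is the heart of the matter, and I would build it as an explicit pointwise limit of continuous functions coming from a self-similar, Cantor-type scheme. At stage $n$ one partitions $[0,1]$ into dyadic slope-intervals and defines a continuous (piecewise-linear) $c_n$ whose graph lies in $2^n$ small boxes arranged so that, in every direction, the stage-$n$ ranges are covered by finitely many intervals of rapidly shrinking total length; then $c:=\lim_n c_n$ is Baire-1 by the very definition used in this note, and the self-similar box structure forces $R_\lambda$ to be a measure-zero Cantor set for almost every $\lambda$. Two competing features must be balanced. First, Tao's result already shows $c$ cannot be continuous: a continuous $c$ would make each $R_\lambda$ a connected set, hence an interval, hence of positive measure for all but at most one $\lambda$; so the jumps that carve out the Cantor gaps in $R_\lambda$ are forced and are dense, and only \emph{pointwise} (not uniform) convergence $c_n\to c$ is available.

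The step I expect to be the main obstacle is ensuring that the \emph{limiting} union $B$ is genuinely null, rather than merely a limit of stage sets $B_n$ of vanishing area. Pointwise limits do not commute with ``area of the union,'' so one cannot simply let $\mathrm{area}(B_n)\to0$ and pass to the limit; instead the scheme must be engineered, through the exact duality above and the self-similar covering of each $R_\lambda$, so that the limiting union sits inside a single controlled null set. Verifying this measure-zero property for the limit, while simultaneously keeping $c$ an honest pointwise limit of the continuous approximants $c_n$, is where I would expect to spend most of the effort; once it is in place, the reduction in the first two paragraphs delivers the theorem.
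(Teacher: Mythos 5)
Your reduction and your duality set-up are sound and in fact match the paper's strategy in spirit: both route the area bound through the line-set duality of Falconer's Chapter~12 (equivalently, Besicovitch's projection theorem for irregular $1$-sets), and your selection-then-compose structure (a Baire-1 selection of base points composed with the continuous map $l$) is exactly the right skeleton. But the heart of the proof is missing, and one of the steps you sketch for it fails as stated. A continuous $c_n$ on $[0,1]$ cannot have its graph contained in $2^n$ pairwise disjoint small boxes whose $b$-projections cover $[0,1]$: continuity forces connecting pieces that leave the boxes, and then --- by your own (correct) later observation --- each stage range $R_\lambda^{(n)}=\{c_n(b)+\lambda b:b\in[0,1]\}$ is a whole interval, not a union of intervals of small total length. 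So the stage-$n$ covering you hope to pass to the limit never holds at any finite stage, and you have correctly diagnosed, but not filled, the resulting gap: nothing in the proposal actually shows the limiting union $B$ is null, nor that the limit graph lands inside the intended Cantor-type set. (A smaller point: your Fubini ``if and only if'' tacitly requires $B$ to be measurable, which the sketch does not address; and the appeal to Tao for the discontinuity of $c$ is unnecessary, since a continuous non-affine $c$ gives interval ranges by an elementary argument.)

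The paper evades all of this by reversing the order of quantifiers: fix the null set first, then select inside it, so no limiting process ever touches the measure estimate. Concretely, it takes the compact self-similar set $F\subseteq[0,1]^2$ obtained from the $4\times 4$ subdivision keeping one subsquare in each row and column; $F$ has full projection onto both axes, satisfies $0<\mathcal{H}^1(F)<\infty$, and is purely $1$-unrectifiable, so Proposition~12.1 of Falconer gives at once that the \emph{fixed} line set $L(F)$ has zero area. The Baire-1 selection is then essentially free: $g(t)=\inf\{x:(x,t)\in F\}$, the left envelope of the compact set $F$, is lower semicontinuous and hence Baire-1, and $f(t)=l(g(t),t)$ sweeps out a union contained in the pre-fixed null set $L(F)$ --- no approximants, no interchange of limit and area, no measurability worries. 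To repair your proposal along your own lines you would have to prove that your pointwise limit $c$ has its graph inside the intersection of the stage boxes, i.e.\ inside a compact purely unrectifiable set with full slope-projection; at that point you would have reconstructed exactly the paper's move, with the left-envelope selection being the cleanest way to extract a Baire-1 (indeed lower semicontinuous) function from such a set.
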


\begin{proof}
  Let $F$ be the one-dimensional self-similar subset of the unit square determined by dividing the unit square into 16 equal subsquares and retaining the second square of the first row, the last square of the second row, the first square of the third row and the third square of the fourth row. Thus each row and column contains exactly one subsquare and the resulting self-similar set has full projection onto both the $x$- and $y$-axes, has positive and finite 1-dimensional Hausdorff measure, and is purely 1-unrectifiable. Consequently $L(F)=\{a+bx: (a,b)\in F,\, x\in\R\}$ contains lines crossing $[0,1]$ of every slope i $[0,1]$. Also, by Proposition~12.1 of~\cite{falconer},  $L(F)$ has zero area.

  Define $g\colon [0,1]\to\R$ by
  $$g(t)=\inf\{x: (x,t)\in F\}.$$
  Then it is easy to see that $g$, being the left envelope of the compact set $F$, is lower semi-continuous  and hence, in particular, it is Baire-1.
 Finally let $f\colon [0,1]\to \R^2$ be given by
  $$f(t)= (l\circ g)(t),$$
  then $f$ is also Baire-1, since it is the composition of a continuous map with a Baire-1 map.

  It remains only to observe that $\bigcup \{f(s):s\in [0,1]\}$ contains a line segment of every slope in $[0,1]$ and has zero Lebesgue measure since $\bigcup \{f(s):s\in [0,1]\}\subseteq L(F)$.
\end{proof}

\bibliographystyle{amsplain}

\end{document}